\newtheorem{theorem}{Theorem}
\newtheorem{corollary}{Corollary}
\newtheorem*{proposition}{Proposition}
\newtheorem{lemma}{Lemma}
\theoremstyle{definition}
\theoremstyle{remark}
\begin{document}

\title[]{Some Remarks on the Erd\H{o}s \\Distinct Subset Sums Problem}

\author[]{Stefan Steinerberger}
\address{Department of Mathematics, University of Washington, Seattle, WA 98195, USA} \email{steinerb@uw.edu}

\keywords{Erd\H{o}s Distinct Subset Sums, Random Walk.}
\subjclass[2010]{05D05, 05D40} 
\thanks{S.S. is supported by the NSF (DMS-2123224) and the Alfred P. Sloan Foundation.}

\begin{abstract} Let $\left\{a_1, \dots, a_n\right\} \subset \mathbb{N}$ be a set of positive integers, $a_n$ denoting the largest element, so that for any two of the $2^n$ subsets the sum of all elements is distinct. Erd\H{o}s asked whether this implies $a_n \geq c \cdot 2^n$ for some universal $c>0$. We prove, slightly extending a result of Elkies, that for any $a_1, \dots, a_n \in \mathbb{R}_{>0}$
$$ \int_{\mathbb{R}} \left( \frac{\sin{ x}}{ x} \right)^2  \prod_{i=1}^{n} \cos{( a_i x)^2} dx \geq \frac{\pi}{2^{n}}$$
with equality if and only if all subset sums are $1$-separated. This leads to a new proof of the currently best lower bound $a_n \geq \sqrt{2/\pi n} \cdot 2^n$. The main new insight is that having distinct subset sums and $a_n$ small requires the random variable $X = \pm a_1 \pm a_2 \pm \dots \pm a_n$ to be close to Gaussian in a precise sense.
\end{abstract}

\maketitle

\section{Introduction}
 A problem of Erd\H{o}s \cite{erd} is as follows: if $\left\{a_1, \dots, a_n\right\} \subset \mathbb{N}$ is a set of positive integers, assumed to be ordered as $a_1 < a_2 < \dots < a_n$, such that for each of the $2^n$ subsets the sum of all elements is unique, does this force $a_n \geq c \cdot 2^n$ for some universal $c>0$? The problem is quite old. Erd\H{o}s \cite{erd3} refers to it as ``perhaps my first serious conjecture which goes back to 1931 or 32''.
  Since the sums over all subsets leads to $2^{n} -1$ distinct positive integers, one has
$ \sum_{i=1}^{n} a_i \geq 2^{n} -1$
(sharp for the powers of $2$) and $a_n \gtrsim 2^n/n$. Currently, the best known bound is
$$ a_n \geq (c-o(1)) \frac{2^n}{\sqrt{n}}$$
where different estimates for $c$ have been given over the years
\begin{align*}
c &\geq 1/4 \qquad &&\mbox{Erd\H{o}s and Moser \cite{erd}}\\
&\geq 2/3^{3/2} \qquad &&\mbox{Alon and Spencer \cite{alon}} \\
&\geq  1/\sqrt{\pi} &&\mbox{Elkies \cite{elk}}\\
&\geq 1/\sqrt{3} \qquad &&  \mbox{Bae \cite{bae} , Guy \cite{guy}}\\
&\geq  \sqrt{3/2\pi} &&\mbox{Aliev \cite{aliev}} \\
&\geq \sqrt{2/\pi} &&\mbox{Dubroff, Fox and Xu \cite{dubroff}}.
\end{align*}
The literature (see \cite{aliev}) mentions an unpublished manuscript of Elkies and Gleason also showing $c \geq \sqrt{2/\pi}$.
Dubroff, Fox and Xu give two different proofs: one appeals to the Berry-Esseen Theorem, the other uses an isoperimetric principle of
Harper \cite{harper}. 
In the other direction, we note that the powers of 2, with $a_n = 2^{n-1}$, are not extremal: already in 1968, Conway and Guy \cite{conway} (answering another question by Erd\H{os} \cite{erd2}) produced a candidate construction showing that $a_{n} \leq 2^{n-2}$ is possible (see Bohman \cite{boh}). The currently best construction is due to Bohman \cite{boh2} showing $a_n \leq 0.88008 \cdot 2^{n-2}$, see also \cite{behr, bor, lind, lunnon, malt}. It is an interesting question whether relaxing the condition somewhat can give rise to interesting examples. More concretely, are there sets $\left\{a_1, \dots, a_n\right\} \subset \mathbb{N}$ such that the subset sums attain $(1-o(1)) \cdot 2^n$ distinct values and $a_n = o(2^n)$?
\\
 
 The main purpose of our paper is to give a new proof of $c \geq \sqrt{2/\pi}$.  Many arguments, starting with Erd\H{o}s and Moser \cite{erd}, have considered the random walk $X = \pm a_1 \pm a_2 \pm \dots \pm a_n$, where all signs are chosen independently and uniformly at random. If all subset sums are distinct, then all $2^n$ possible outcomes of the random walk are equally likely and they are all at least distance 2 from each other.
A well-known argument (see \cite{bae,erd, guy, lind}) exploits this by using
$$ n \cdot a_n^2 \geq  \sum_{i=1}^{n} a_i^2 = \mathbb{E} (X^2) \geq \frac{2}{2^n}  \sum_{k=1}^{2^{n-1}}{(2k-1)^2} = \frac{4^n-1}{3}$$
  which shows $c \geq 1/\sqrt{3}$.
 This was further refined by Dubroff, Fox and Xu \cite{dubroff} who argued, using the Berry-Esseen theorem, that if $a_n^2$ is relatively small compared to $\sum_{i=1}^{n}a_i^2$ (the variance of the random walk), then the random walk is well-described by a Gaussian. Our argument will imply a somewhat converse result: unless the distribution of the random walk is close to a Gaussian (in a sense that will be made precise), the set cannot have distinct subset sums and $a_n$ small. This leads to an interesting reformulation of the Erd\H{o}s distinct subset sums problem as a problem in probability theory: whether it is possible for random walks with a large variance but relatively small largest stepsize to emulate a Gaussian distribution very well. 
\section{Results}
\subsection{Main Results.} We start with a basic analytic characterization of what it means for a set of $n$ positive real numbers to have the property that all subset sums are at least distance 1 from each other (if all numbers are integers, then this is the same as asking them to be distinct).
\begin{theorem}
Let $a_1, \dots, a_n > 0$ be positive, real numbers. Then
$$ \int_{\mathbb{R}} \left( \frac{\sin{2 \pi x}}{ 2 \pi x} \right)^2  \prod_{i=1}^{n} \cos{(2 \pi a_i x)^2} dx \geq \frac{1}{2^{n+1}}.$$
Equality occurs if and only if all subset sums are distance $\geq 1$ from each other.
\end{theorem}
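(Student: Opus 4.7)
The plan is to recognize the left-hand side as the squared $L^2$ norm of a Fourier transform and to apply Plancherel's identity, reducing the inequality to a combinatorial statement about intervals.

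First, I would observe that the integrand is $f(x)^2$ where
\[ f(x) = \frac{\sin 2\pi x}{2\pi x} \prod_{i=1}^{n} \cos(2\pi a_i x). \]
Using the conventions $\widehat{h}(\xi) = \int h(t) e^{-2\pi i t \xi} dt$, the factor $\frac{\sin 2\pi x}{2\pi x}$ is the Fourier transform of $\tfrac{1}{2}\mathbf{1}_{[-1,1]}$, while $\cos(2\pi a_i x)$ is the Fourier transform of $\tfrac{1}{2}(\delta_{a_i}+\delta_{-a_i})$. Hence $f = \widehat{g}$, where
\[ g = \frac{1}{2^{n+1}}\,\mathbf{1}_{[-1,1]} \ast \prod_{i=1}^{n}(\delta_{a_i}+\delta_{-a_i}) = \frac{1}{2^{n+1}} \sum_{\varepsilon \in \{-1,1\}^n} \mathbf{1}_{[s_\varepsilon-1,\,s_\varepsilon+1]}, \]
with signed sums $s_\varepsilon = \sum_i \varepsilon_i a_i$.

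Next, I would apply Plancherel to rewrite the integral as $\int g(x)^2\,dx = \frac{1}{4^{n+1}} \int N(x)^2\,dx$, where $N(x)$ is the integer-valued counting function $N(x) = \#\{\varepsilon : |x - s_\varepsilon|\le 1\}$. Since each of the $2^n$ intervals has length $2$, $\int N(x)\,dx = 2^{n+1}$, and the key estimate is the pointwise inequality $N(x)^2 \geq N(x)$, valid because $N$ takes values in $\mathbb{Z}_{\geq 0}$. Integrating gives
\[ \int g^2 \geq \frac{1}{4^{n+1}}\int N = \frac{2^{n+1}}{4^{n+1}} = \frac{1}{2^{n+1}}, \]
which is the desired inequality.

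For equality, $N^2 = N$ a.e. forces $N \in \{0,1\}$ a.e., i.e.\ the closed intervals $[s_\varepsilon-1, s_\varepsilon+1]$ are pairwise essentially disjoint, which is equivalent to $|s_\varepsilon - s_{\varepsilon'}| \geq 2$ for all $\varepsilon \neq \varepsilon'$. Writing $S = \{i : \varepsilon_i = 1\}$ so that $s_\varepsilon = 2\sigma_S - \sum_i a_i$, this becomes $|\sigma_S - \sigma_{S'}| \geq 1$ for all distinct subsets $S, S' \subseteq \{1,\dots,n\}$, matching the stated equality condition. I do not expect a substantive obstacle: the only thing to check carefully is the normalization of the Fourier transform and the bookkeeping of the $2^{n+1}$ vs.\ $4^{n+1}$ factors, which come out cleanly.
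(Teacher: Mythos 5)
Your proposal is correct and is essentially the paper's own argument read in reverse: the function $g$ you build is exactly the paper's smoothed measure $h*\mu$ with $h=\tfrac12\chi_{[-1,1]}$, the Plancherel step is identical, and your pointwise bound $N^2 \ge N$ is the same inequality as the paper's dropping of the nonnegative off-diagonal cross terms in $\|h*\mu\|_{L^2}^2$. The equality analysis (essential disjointness of the intervals, i.e.\ $2$-separation of the signed sums, i.e.\ $1$-separation of subset sums) also matches the paper's.
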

This result is very similar to the analytic approach of Elkies \cite{elk} based on Laurent series. If all $a_i$ are integers, the product is
$2\pi$-periodic which simplifies the integral and recovers the characterization used by Elkies.
\begin{corollary}[Elkies \cite{elk}] Let $a_1, \dots, a_n > 0$ be positive integers. Then
$$ \int_0^1  \prod_{i=1}^{n} \cos{( 2\pi a_i x)^2} dx \geq \frac{1}{2^n}$$
with equality if and only if all subset sums are distinct.
\end{corollary}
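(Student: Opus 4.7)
The plan is to derive the corollary by specializing the theorem to integer $a_i$ and exploiting periodicity of $F(x) = \prod_{i=1}^n \cos(2\pi a_i x)^2$. Since each $a_i \in \mathbb{Z}$, the identity $\cos(2\pi a_i(x+\tfrac{1}{2}))^2 = ((-1)^{a_i}\cos(2\pi a_i x))^2 = \cos(2\pi a_i x)^2$ shows that $F$ is $\tfrac{1}{2}$-periodic, in particular $1$-periodic. Writing $g(x) = (\sin(2\pi x)/(2\pi x))^2$, I would first periodize the weight:
\begin{equation*}
\int_\mathbb{R} g(x)\, F(x)\, dx \;=\; \int_0^1 F(x) \sum_{k\in\mathbb{Z}} g(x+k)\, dx.
\end{equation*}

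The periodization of $g$ is a standard exercise: since $\sin(2\pi(x+k)) = \sin(2\pi x)$ for $k\in\mathbb{Z}$,
\begin{equation*}
\sum_{k\in\mathbb{Z}} g(x+k) \;=\; \frac{\sin^2(2\pi x)}{(2\pi)^2}\sum_{k\in\mathbb{Z}}\frac{1}{(x+k)^2} \;=\; \frac{\sin^2(2\pi x)}{4\sin^2(\pi x)} \;=\; \cos^2(\pi x),
\end{equation*}
using the classical partial-fractions identity $\sum_{k\in\mathbb{Z}}(x+k)^{-2} = \pi^2/\sin^2(\pi x)$ together with the double-angle formula $\sin(2\pi x) = 2\sin(\pi x)\cos(\pi x)$.

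To reduce $\int_0^1 F(x)\cos^2(\pi x)\,dx$ to $\int_0^1 F(x)\,dx$, I would invoke the $\tfrac{1}{2}$-periodicity of $F$ a second time. Splitting $[0,1]$ at $\tfrac{1}{2}$ and substituting $y = x - \tfrac{1}{2}$ in the right-hand piece (where $F(y+\tfrac{1}{2}) = F(y)$ and $\cos^2(\pi(y+\tfrac{1}{2})) = \sin^2(\pi y)$) gives
\begin{equation*}
\int_0^1 F(x)\cos^2(\pi x)\,dx \;=\; \int_0^{1/2} F(y)\bigl(\cos^2(\pi y) + \sin^2(\pi y)\bigr)\,dy \;=\; \tfrac{1}{2}\int_0^1 F(x)\,dx.
\end{equation*}
Combining this with the bound $\int_\mathbb{R} g\, F \geq 1/2^{n+1}$ supplied by the theorem yields $\int_0^1 F(x)\,dx \geq 1/2^n$. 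Since for integers the subset sums are at mutual distance at least $1$ if and only if they are pairwise distinct, the equality clause of the theorem transfers verbatim. I do not anticipate any real obstacle in this derivation; the only nonelementary ingredient is the cotangent-style summation identity, which is entirely standard.
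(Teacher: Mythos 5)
Your argument is correct, and it overlaps with the paper's proof only in its first half. Like the paper, you periodize the weight $(\sin(2\pi x)/(2\pi x))^2$ over integer translates; your cotangent-series computation gives $\sum_{k\in\mathbb{Z}} g(x+k)=\cos^2(\pi x)$, which is exactly the identity $(1+\cos(2\pi x))/2$ used in the paper. After that the routes diverge. You exploit the $\tfrac12$-periodicity of $F(x)=\prod_i\cos^2(2\pi a_i x)$ (equivalently, that its Fourier series contains only even frequencies, so the $\cos(2\pi x)$ term integrates to zero against it) to get the exact identity $\int_0^1 F(x)\cos^2(\pi x)\,dx=\tfrac12\int_0^1 F(x)\,dx$, and then you simply quote Theorem 1, including its equality clause, together with the observation that for integers $1$-separation of subset sums is the same as distinctness. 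The paper instead expands the whole integrand $(1+\cos(2\pi x))\prod_i\cos^2(2\pi a_i x)$ into exponentials and runs the orthogonality count on $[0,1]$: the constant term contributes $2^{-n}$, all other frequency combinations are nonzero precisely when the subset sums are distinct, and coincident subset sums produce extra nonnegative zero-frequency mass. That is essentially Elkies' original argument, so the paper's proof of the corollary is self-contained and identifies combinatorially which coincidences inflate the integral, whereas your derivation is a cleaner logical reduction of the corollary to Theorem 1, with the equality case transferring automatically through an exact identity rather than being re-derived. Both are valid; there is no gap in your version.
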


All cosines in the product are aligned around $x=0$. A natural approach is thus to bound the contribution coming from a small interval around the origin of length $\sim 1/a_n^{}$. If $a_n$ is too small, that contribution is too large (see Lemma 1) and this was Elkies' original approach to prove $c \geq \sqrt{1/\pi}$ (see Lemma 1). The main novelty of our approach is to analyze the contribution coming from outside that interval. This leads to Corollary 2.

\begin{corollary}
We have
$$ a_n \geq \left(1-o(1)\right) \cdot  \sqrt{\frac{2}{\pi}}\frac{2^{n}}{\sqrt{n}}.$$
\end{corollary}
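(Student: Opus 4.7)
The plan is to specialize Theorem~1 to sets $\{a_1,\dots,a_n\}\subset\mathbb N$ with distinct subset sums, so that the $1$-separation condition is automatic and Theorem~1 holds with equality:
\begin{equation*}
I := \int_{\mathbb R}\left(\tfrac{\sin 2\pi x}{2\pi x}\right)^{2}\prod_{i=1}^{n}\cos(2\pi a_i x)^{2}\,dx = \tfrac{1}{2^{n+1}}.
\end{equation*}
Set $\sigma^{2}=\sum a_i^{2}$. Since $\sigma\le a_n\sqrt n$ by Cauchy--Schwarz, the corollary will follow from the bound $\sigma\ge (1-o(1))\sqrt{2/\pi}\,2^{n}$. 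The strategy is to lower bound $I$ by splitting the integration into the near-origin interval $|x|\le\delta$ with $\delta\sim 1/a_n$ and its complement, extracting the sharp constant $\sqrt{2/\pi}$ from their combination.

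For the near-origin contribution (essentially Lemma~1 of the paper): on $|x|\le\delta$ every cosine is nonnegative, and the pointwise bound $\cos^{2}(y)\ge 1-y^{2}$ together with the Weierstrass inequality $\prod_i(1-t_i)\ge 1-\sum_i t_i$ yields $\prod\cos^{2}(2\pi a_i x)\ge e^{-4\pi^{2}\sigma^{2}x^{2}}(1-o(1))$, uniformly on the window. A Parseval calculation --- using that the Fourier transform of $(\sin 2\pi x/(2\pi x))^{2}$ is the tent $\tfrac12\max(1-|\xi|/2,0)$ supported on $[-2,2]$ --- evaluates
\begin{equation*}
\int_{\mathbb R}\left(\tfrac{\sin 2\pi x}{2\pi x}\right)^{2}e^{-4\pi^{2}\sigma^{2}x^{2}}\,dx=\frac{1+o(1)}{2\sqrt{\pi}\,\sigma},
\end{equation*}
and this alone yields Elkies' bound $\sigma\ge(1-o(1))2^n/\sqrt\pi$, hence $c\ge 1/\sqrt\pi$.

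The factor of $\sqrt 2$ improvement is the novelty and comes from carefully analyzing the region $|x|>\delta$. There the product $H(x):=\prod\cos^{2}(2\pi a_i x)$ is no longer near its peak, but since the $a_i$ are integers it is $\tfrac12$-periodic with mean value $\int_{0}^{1}H\,dx=2^{-n}$ (Corollary~1), while $F(x)=(\sin 2\pi x/(2\pi x))^{2}$ varies slowly on scale $1$ with $\int F=\tfrac12$. A Riemann--Lebesgue-type averaging argument gives $\int_{|x|>\delta}FH\,dx\approx 2^{-n}\cdot\int_{|x|>\delta}F\,dx$, and combining this nontrivial tail contribution with the near-origin Gaussian mass --- subject to the fixed value $I=1/2^{n+1}$ --- produces, after careful bookkeeping, a constraint strictly stronger than Elkies' and yields $\sigma\ge (1-o(1))\sqrt{2/\pi}\,2^{n}$. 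Probabilistically this is the statement that at the lattice spacing~$2$ on which the subset sums of integers live, the random walk $X=\pm a_1\pm\cdots\pm a_n$ must closely match a Gaussian whose maximum density $2/(\sigma\sqrt{2\pi})=\sqrt{2/\pi}/\sigma$ matches the enforced value $\|P\|_\infty=2^{-n}$; this is the precise sense of ``$X$ close to a Gaussian'' alluded to in the abstract.

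The main obstacle is making the tail averaging quantitative with an error of genuine $o(1/\sigma)$ quality. The naive Riemann--Lebesgue error is of order $a_n/\sigma$, which is exactly at the boundary and would only just miss the $\sqrt 2$ improvement. The proof must therefore exploit the compact Fourier support of the tent function $\widehat{F}$ to truncate $H$ to its first few Fourier modes and control the deviation from the averaged value exactly --- it is this interaction between the Fejér-type smoothing in Theorem~1 and the integer lattice structure that converts Elkies' $1/\sqrt\pi$ into the sharper $\sqrt{2/\pi}$.
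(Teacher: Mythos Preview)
Your near-origin analysis matches the paper's Lemma~1 and is fine. The tail analysis, however, does not work as you sketch it, and this is where the whole $\sqrt{2}$ improvement lives. Your heuristic $\int_{|x|>\delta}FH\approx 2^{-n}\int_{|x|>\delta}F$ cannot hold with a usable error: since the full integral satisfies $\int_{\mathbb R}FH=2^{-n-1}=2^{-n}\int_{\mathbb R}F$ \emph{exactly}, the discrepancy on the tail equals the discrepancy on $|x|\le\delta$, namely $\int_{|x|\le\delta}FH-2^{-n}\int_{|x|\le\delta}F\sim 1/(2\sqrt\pi\,\sigma)$, which is precisely the scale you need to resolve. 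In other words, the ``averaging error'' you try to control is not a lower-order correction but is the entire signal. Your proposed fix via the compact support of $\widehat F$ explains the exact global identity (only the zero mode of $H$ survives against the tent), but once you truncate to $|x|>\delta$ the Fourier transform of $F\chi_{|x|>\delta}$ is no longer compactly supported and all modes of $H$ re-enter; nothing in your outline addresses this.

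The paper's route is genuinely different from yours on the tail. It passes back to physical space by Plancherel, showing (Theorem~2 and Lemma~3) that
\[
\int_{|x|\ge 1/(4a_n)}\Bigl(\tfrac{\sin 2\pi x}{2\pi x}\Bigr)^{2}\prod_i\cos(2\pi a_i x)^{2}\,dx=\int_{\mathbb R}\bigl((h*\mu)(x)-\gamma(x)\bigr)^{2}dx+o(2^{-n}),
\]
and then lower-bounds this $L^{2}$ distance using two facts: $h*\mu$ takes only the values $0$ and $2^{-n-1}$, and by Berry--Esseen (this is where the hypothesis $a_n^{2}\le c\,n^{-2/3-\varepsilon}\sum a_i^{2}$ enters) the local density of the level set $\{h*\mu=2^{-n-1}\}$ must track $\gamma$. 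An explicit computation of $\int(2^{-n-1}-\gamma)^{2}e^{-x^{2}/2\sigma^{2}}+\int\gamma^{2}(1-e^{-x^{2}/2\sigma^{2}})$ then yields the tail bound $(\sqrt2-1)/(2\sqrt\pi\,\sigma)$, and combining with the near-origin $1/(2\sqrt\pi\,\sigma)$ gives $\sqrt2/(2\sqrt\pi\,\sigma)\le 2^{-n-1}$. Your final probabilistic paragraph is essentially the Dubroff--Fox--Xu density-matching argument, which is a correct separate proof, but it is not what your Fourier-side averaging actually establishes.
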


 While Corollary 2 itself does not tell us anything new, the proof establishes a connection to probability theory which will be discussed in \S 2.2 and \S 2.3.

\subsection{Proof of Corollary 2: Outline.} We use Theorem 1. The first ingredient is a lower bound on how much the integrand contributes to the integral close to the origin where all the cosines are aligned.

\begin{lemma}[see Elkies \cite{elk}]
Suppose that $\left\{a_1, \dots, a_n\right\}$ is a subset of the positive real numbers. Then
$$ \int_{|x| \leq \frac{1}{4a_n}} \left( \frac{\sin{2\pi x}}{2\pi  x} \right)^2  \prod_{i=1}^{n} \cos{( 2\pi a_i x)^2} dx \geq (1+o(1)) \cdot  \frac{1}{2} \frac{1}{ a_n} \frac{1}{\sqrt{ \pi n}}.$$
\end{lemma}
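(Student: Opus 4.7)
The plan is to restrict the integral to a subinterval where the integrand is sharply approximated by a Gaussian, evaluate the Gaussian integral asymptotically, and then apply the crude upper bound $\sum a_i^2 \leq n a_n^2$ to produce the $\sqrt{n}$ in the denominator.

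Set $V := \sum_{i=1}^n a_i^2/a_n^2 \in [1,n]$. The first step is to restrict to the smaller interval $|x| \leq V^{-1/3}/a_n$, well inside the stated domain. Since each $|2\pi a_i x| \leq 2\pi V^{-1/3}$ is uniformly small, the Taylor expansion $\log \cos^2 \theta = -\theta^2 - \theta^4/6 - \cdots$ gives, after multiplication,
$$\prod_{i=1}^n \cos^2(2\pi a_i x) \;\geq\; \exp\!\Bigl(-(1+o(1)) \cdot 4\pi^2 x^2 \sum_{i=1}^n a_i^2 \Bigr),$$
and likewise the sinc prefactor equals $1 - o(1)$ uniformly on this subinterval (the relevant $|x|$ is tending to $0$).

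Next I would substitute $y = 2\pi x \sqrt{\sum a_i^2}$, which maps the domain to $|y| \leq 2\pi V^{1/6} \to \infty$, and compute
$$\int_{|x| \leq V^{-1/3}/a_n} \exp\!\Bigl(-(1+o(1)) \cdot 4\pi^2 x^2 \textstyle\sum a_i^2\Bigr) dx \;=\; \frac{1-o(1)}{2\sqrt{\pi \sum a_i^2}}.$$
Inserting the crude bound $\sum a_i^2 \leq n a_n^2$ then yields the claimed $(1-o(1))/(2 a_n \sqrt{\pi n})$.

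The main obstacle is sharpness: the target constant $1/\sqrt{\pi n}$ demands that the leading Gaussian coefficient in the lower bound on $\prod \cos^2$ be exactly $1$, so the Taylor remainder has to be kept sub-leading throughout. This is the purpose of truncating to $|x| \leq V^{-1/3}/a_n$; the choice balances the competing needs that $2\pi a_n x$ be small (so Taylor is tight) and that the truncation vastly exceed the Gaussian scale $1/(a_n\sqrt{V})$ (so the tail loss is negligible), and any power between $V^{-1/2+\varepsilon}$ and $V^{-\varepsilon}$ would work as well. The degenerate case $V = O(1)$ (most $a_i \ll a_n$) falls outside this Gaussian approach, but the lemma is essentially vacuous there, since the integral is bounded below by a positive constant independent of $n$, whereas the right-hand side is $O(1/\sqrt{n})$.
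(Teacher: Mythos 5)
Your main case is handled correctly, but by a genuinely different route from the paper's. The paper bounds each factor pointwise via $\cos(2\pi a_i x) \ge \cos(2\pi a_n x)$ on the whole window $|x|\le 1/(4a_n)$, reduces to the Wallis-type integral $\int \cos^{2n}(2\pi a_n x)\,dx = \tfrac{1}{2\pi a_n}\tfrac{\pi}{4^n}\binom{2n}{n}$, and reads off the constant from the central binomial coefficient; you instead prove the Gaussian lower bound $\prod_i \cos^2(2\pi a_i x)\ge \exp\bigl(-(1+o(1))4\pi^2 x^2 \sum_i a_i^2\bigr)$ on a shorter window and only then apply $\sum_i a_i^2\le n a_n^2$. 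Your route in fact yields the sharper intermediate estimate $(1-o(1))\bigl(2\sqrt{\pi\sum_i a_i^2}\bigr)^{-1}$, which the paper records only as a remark after its proof of Lemma 1; the price is that your Taylor-plus-truncation analysis needs $V:=\sum_i a_i^2/a_n^2\to\infty$, whereas the paper's comparison is exact and uniform in $V$. (Both arguments quietly use the ambient context to control the sinc factor: the paper invokes $a_n\ge 2^n/n$, you assert the relevant $|x|$ tends to $0$; as stated for arbitrary positive reals the lemma indeed requires such largeness of $a_n$, so this is not held against you.)

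The genuine gap is your dismissal of the degenerate case $V=O(1)$. Your stated reason --- that the integral is then bounded below by a positive constant independent of $n$ --- is false in exactly the regime where the lemma is applied: the domain has length $1/(2a_n)$ and the integrand is at most $1$, so the integral is at most $1/(2a_n)\lesssim n\,2^{-n}$ there. Moreover $V=O(1)$ is not an empty case (for the powers of two, $V\to 4/3$), so it cannot be waved away. The case is easily repaired, but by a different argument than the one you gave: on $|x|\le c/(a_n\sqrt{V})\subset[-\tfrac{1}{4a_n},\tfrac{1}{4a_n}]$ with $c$ a small constant, each $\cos^2(2\pi a_i x)\ge 1-4\pi^2 a_i^2x^2$, hence the product is at least $1-4\pi^2x^2\sum_i a_i^2\ge \tfrac12$, and the sinc factor is bounded below by a universal constant, giving an integral $\gtrsim 1/(a_n\sqrt{V})$, which dominates $1/(2a_n\sqrt{\pi n})$ when $V=O(1)$. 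Alternatively, the paper's pointwise comparison with $\cos^{2n}(2\pi a_n x)$ handles all values of $V$ at once and avoids the dichotomy entirely.
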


This Lemma in conjunction with Theorem 1 already shows $c \geq 1/\sqrt{\pi}$. The main new idea is to prove that contributions far away from the origin can also be analyzed and that they also contribute a substantial amount.

\begin{lemma}
Let $c>0$. Suppose that $\left\{a_1, \dots, a_n\right\} \subset \mathbb{R}_{>0}$ has 1-separated subset sums and, for some $\varepsilon>0$, we have $a_n^2 \leq c \cdot n^{-2/3 -\varepsilon} \sum_{i=1}^{n} a_i^2$. Then, as $n \rightarrow \infty$,
$$ \int_{|x| \geq \frac{1}{4a_n}} \left( \frac{\sin{2\pi x}}{2\pi  x} \right)^2  \prod_{i=1}^{n} \cos{( 2\pi a_i x)^2} dx \geq (1+o(1))\frac{\sqrt{2} - 1}{2 \sqrt{\pi}} \left( \sum_{i=1}^{n}a_i^2 \right)^{-1/2}.$$
\end{lemma}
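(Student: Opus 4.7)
My plan is to upper bound the complementary (inner) integral $\int_{|x|\leq R}(\cdot)\,dx$ with $R := 1/(4a_n)$ by a Gaussian comparison, then subtract from the total, which by Theorem~1 equals exactly $1/2^{n+1}$ under $1$-separation.

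The key pointwise estimate is the elementary inequality $\cos^{2}u \leq e^{-u^{2}}$ valid for $|u|\leq \pi/2$ (easily proved by noting that $f(u):= -\log\cos^{2}u - u^{2}$ vanishes at $0$ with derivative $f'(u) = 2\tan u - 2u \geq 0$ on $[0,\pi/2)$). Applied with $u = 2\pi a_{i} x$, which lies in $[-\pi/2, \pi/2]$ exactly when $|x| \leq R$, and multiplied over $i$,
\[\prod_{i=1}^{n}\cos^{2}(2\pi a_{i} x) \;\leq\; \exp\!\left(-4\pi^{2}\sigma^{2} x^{2}\right), \qquad |x| \leq R,\]
where $\sigma^{2} = \sum_{i} a_{i}^{2}$. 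Plancherel (or direct change of variables) then evaluates the comparison Gaussian integral:
\[\int_{\mathbb{R}}\!\left(\frac{\sin 2\pi x}{2\pi x}\right)^{\!2}\!e^{-4\pi^{2}\sigma^{2} x^{2}}\,dx \;=\; \frac{1}{2\sigma\sqrt{\pi}}\int_{-2}^{2} T(t)\, e^{-t^{2}/(4\sigma^{2})}\,dt,\]
where $T(t) := \max(0,(2-|t|)/4)$. The hypothesis $a_{n}^{2}\leq c\,n^{-2/3-\varepsilon}\sigma^{2}$ gives $\sigma R = \sigma/(4 a_{n}) \to \infty$, so the truncation to $|x|\leq R$ costs only an exponentially small tail; and $\sigma\to\infty$ (from $\sigma\geq 2^{n}/\sqrt{3}$ by Erd\H{o}s--Moser) makes $e^{-t^{2}/(4\sigma^{2})}\to 1$ uniformly on $[-2,2]$. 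Combining these observations gives
\[\int_{|x|\leq R}\!\left(\frac{\sin 2\pi x}{2\pi x}\right)^{\!2}\!\prod_{i=1}^{n}\cos^{2}(2\pi a_{i} x)\,dx \;\leq\; \frac{1+o(1)}{2\sigma\sqrt{\pi}}.\]

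By Theorem~1 applied with equality under $1$-separation, the integral over $\mathbb{R}$ equals $1/2^{n+1}$. Subtracting yields
\[\int_{|x|\geq R}(\cdot)\,dx \;\geq\; \frac{1}{2^{n+1}} - \frac{1+o(1)}{2\sigma\sqrt{\pi}}.\]

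The hard part will be converting this into the claimed form $(1+o(1))(\sqrt{2}-1)/(2\sqrt{\pi}\sigma)$. A direct computation shows the two coincide precisely at the critical variance $\sigma = \sqrt{2/\pi}\cdot 2^{n}$, and in general the target holds iff $\sigma \geq (1-o(1))\sqrt{2/\pi}\cdot 2^{n}$. I expect to establish this auxiliary lower bound on $\sigma$ from a density-matching / Berry--Esseen argument (in the spirit of Dubroff--Fox--Xu): the hypothesis makes the normal-approximation error for $X = \pm a_{1}\pm\cdots\pm a_{n}$ of order $O(a_{n}/\sigma)=o(1)$, so $\mathbb{P}(|X|\leq 1)=(1+o(1))\cdot 2/(\sigma\sqrt{2\pi})$; but $2$-separation of the support of $X$ caps $\mathbb{P}(|X|\leq 1)$ by $2^{-n}$ (at most one atom of $X$ lies in $[-1,1]$), forcing $\sigma\geq(1-o(1))\cdot 2^{n+1}/\sqrt{2\pi} = (1-o(1))\sqrt{2/\pi}\cdot 2^{n}$. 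Plugging this into the previous display produces the claimed lower bound on the outer integral.
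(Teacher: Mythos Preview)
Your overall strategy is sound and, interestingly, takes a rather different route from the paper's. Where the paper proves Lemma~2 by passing through Theorem~2 (the identification of the outer integral with the $L^{2}$-distance $\|h*\mu-\gamma\|_{2}^{2}$, which in turn rests on the Taylor-expansion Lemma~3) and then lower-bounds that $L^{2}$-distance by a direct physical-space density-matching computation, you instead work purely on the Fourier side: you upper-bound the inner integral via the pointwise inequality $\cos^{2}u\le e^{-u^{2}}$, subtract from the exact value $2^{-n-1}$ given by Theorem~1, and then convert $2^{-n-1}$ into a multiple of $\sigma^{-1}$ using the Dubroff--Fox--Xu lower bound $\sigma\ge(1-o(1))\sqrt{2/\pi}\,2^{n}$. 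Your route is shorter and avoids both Lemma~3 and the $L^{2}$-distance interpretation entirely; the paper's route, on the other hand, yields Theorem~2 as a structural byproduct (the probabilistic interpretation of the outer integral), which is one of the paper's main points.

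There is one genuine slip in your Berry--Esseen step. You write $\mathbb{P}(|X|\le 1)=(1+o(1))\cdot 2/(\sigma\sqrt{2\pi})$, but Berry--Esseen only controls the cumulative distribution to within an \emph{additive} error $O\bigl(\sum a_{i}^{3}/\sigma^{3}\bigr)=o(1)$, which swamps the target $2/(\sigma\sqrt{2\pi})\asymp 2^{-n}$; you cannot deduce a multiplicative $(1+o(1))$ estimate on an exponentially small probability this way. The correct Dubroff--Fox--Xu argument (which the paper also invokes) takes instead an interval $J$ of length $|J|\sim\varepsilon\sigma$ around the origin: then $\mu(J)=(1+O(\varepsilon^{2})+o(1))\,|J|\,\gamma(0)$ by Berry--Esseen (now the main term is $\Theta(1)$ and the additive $o(1)$ is harmless), while $2$-separation of the support of $\mu$ gives $\mu(J)\le (1+o(1))\,|J|\cdot 2^{-n-1}$, forcing $\gamma(0)\le(1+o(1))\,2^{-n-1}$ and hence $\sigma\ge(1-o(1))\sqrt{2/\pi}\,2^{n}$. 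With this fix your argument goes through and yields the claimed bound.
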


We note that this lower bound can be bounded from below in terms of $a_n$ using the trivial bound $\sum_{i=1}^{n} a_i^2 \leq n \cdot a_n^2$. 
Combining this with Theorem 1 and Lemma 1, we see that if all subset sums are 1-separated, then
$$ \frac{1}{2^{n+1}} \geq (1+o(1)) \left( \frac{1}{2} \frac{1}{ a_n} \frac{1}{\sqrt{ \pi n}} +  \frac{\sqrt{2} - 1}{2 \sqrt{\pi}} \frac{1}{\sqrt{n} \cdot a_n} \right) = \frac{1+o(1)}{\sqrt{2\pi} \sqrt{n} \cdot a_n}$$
which shows $c \geq \sqrt{2/\pi}$. Lemma 2 appears to be very technical but contains an interesting idea which will tell us something new. Lemma 2 can be written in a completely different way (Theorem 2) and this alternative formulation is also how we are going to prove Lemma 2.

\subsection{Subset Sums and Gaussian Densities.} Let $A = \left\{a_1, \dots, a_n\right\} \subset \mathbb{R}_{>0}$ be a set of positive reals. As already indicated above, we consider the random variable
$ X = \sum_{i=1}^{n} \varepsilon_i a_i$
where $\varepsilon_i \in \left\{-1,1\right\}$ independently and with equal likelihood (also known as Rademacher random variables). This random variable is distributed according to some probability
measure $\mu$ on $\mathbb{R}$. Note that we can write
$$ X = - \sum_{i=1}^{n} a_i + 2 \sum_{i=1 \atop \varepsilon_i = 1}^{n} a_i.$$
If the minimal distance between the sum of two different subsets of $A$ is $1$, then the minimal distance between any two distinct values of $X$ is two. Moreover, by the subset sum condition, $X$ assumes $2^n$ distinct values which implies 
$$ \mu = \frac{1}{2^n} \sum_{i=1}^{2^n} \delta_{x_i} \qquad \mbox{where} \qquad \min_{i \neq j} |x_i - x_j| = 2.$$
Our main question of interest will now be whether $\mu$ is close to a Gaussian (and, if so, in what sense). Consider first a simple example: the set $\left\{1, 2, \dots, 2^{n-1}\right\}$. It is easy to see that all subset sums are distinct (the uniqueness of binary expansion) and, following the construction, we see that $\mu$ is supported on all $2^n$ odd numbers in $[-2^n, 2^n]$ roughly emulating a uniform distribution over that interval. A uniform distribution is not particularly close to a Gaussian overall. This will now be compared to a better construction: we take the first 22 terms induced by the Conway-Guy sequence \cite{conway} (where 22 was chosen so as to be `large' while still computationally feasible). We end up with a set $\left\{a_1,\dots, a_{22}\right\} \subset \mathbb{N}$ with distinct subset sums and $a_{22} = 1051905 \sim 0.51 \cdot 2^{21}$. The probability distribution of the associated random walk $\mu$ is shown in Figure 1. This is quite a bit closer to a Gaussian than uniform distribution would be. This is not a coincidence.

\begin{center}
\begin{figure}[h!]
\includegraphics[width=0.7\textwidth]{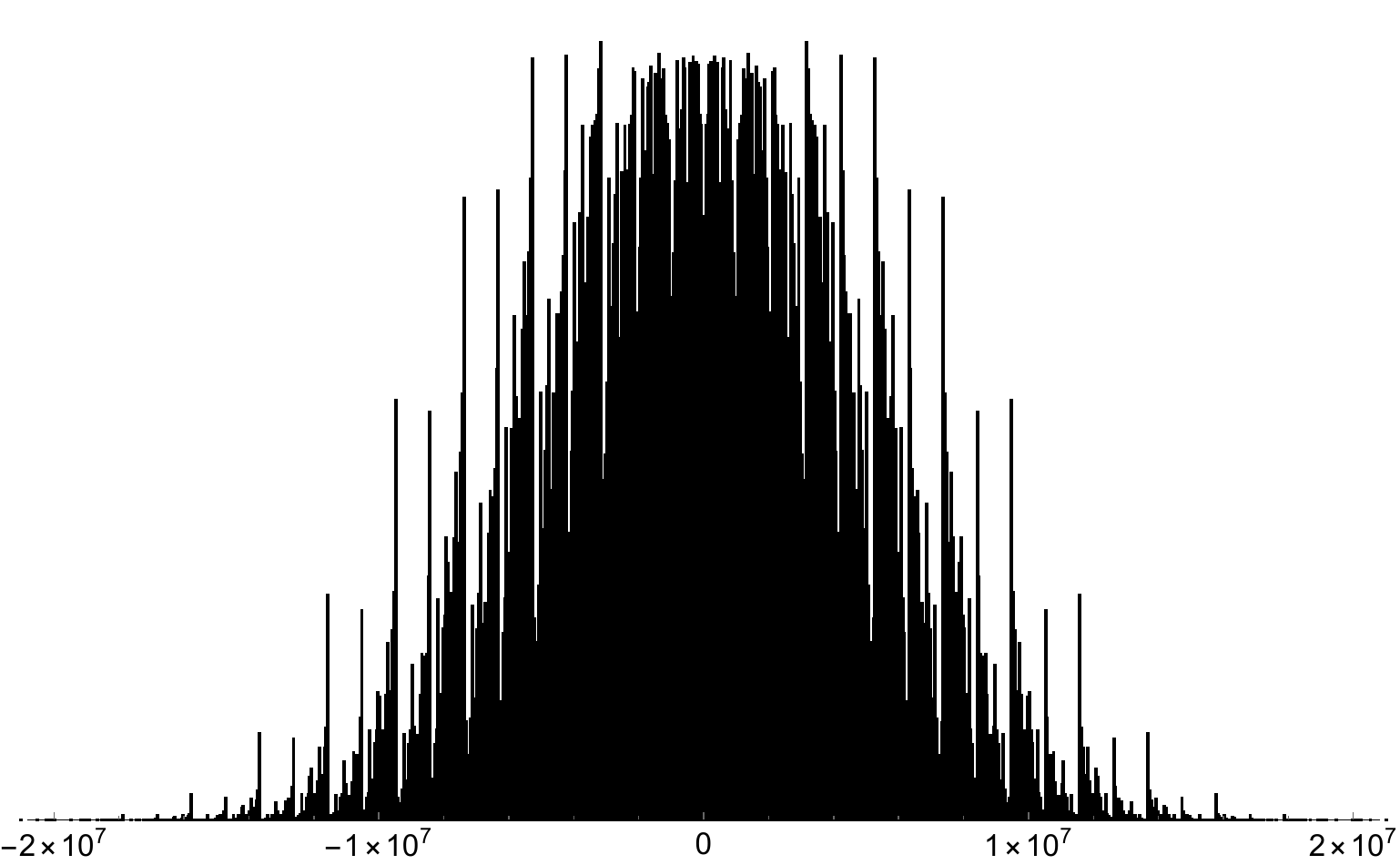}
\caption{A histogram of the discrete measure $\mu$ derived from the first 22 terms from the Conway-Guy sequence.  }
\end{figure}
\end{center}
\vspace{-0pt}
We start by trying to understand which Gaussian we should compare the distribution $\mu$ to. A Gaussian is uniquely determined by mean and variance.
 Since $\mu$ is symmetric around the origin, the expectation is $\mathbb{E}X = 0$.  Simultaneously, we
have an explicit expression for the variance and
$$ \mathbb{E} \left(X^2\right) = \mathbb{E} \left(\sum_{i=1}^{n} \varepsilon_i a_i \right)^2 = \mathbb{E} \sum_{i,j=1}^{n} \varepsilon_i \varepsilon_i  a_i a_j = \sum_{i=1}^{n} a_i^2.$$
The probability density function of that Gaussian will be abbreviated as
$$ \gamma(x) = \frac{1}{\sqrt{2\pi}} \left( \sum_{i=1}^{n} a_i^2 \right)^{-1/2} \exp\left( -\frac{x^2}{2} \left( \sum_{i=1}^{n} a_i^2 \right)^{-1}\right).$$
Note that $\gamma$ is a smooth function while $\mu$ is a singular measure. To facilitate a comparison between the two, we will introduce a smoothed version of $\mu$. 
Consider the normalized characteristic function $ h(x) = (1/2) \cdot\chi_{[-1,1]}.$
Since both $\mu$ and $h$ are probability measures, their convolution 
$$ (h * \mu)(x) = \frac{1}{2^n} \sum_{i=1}^{2^n} \frac{1}{2} \chi_{[x_i - 1, x_i + 1]}(x)$$
is also a probability measure. We observe that $h*\mu$ is a sum of characteristic functions centered at the points $x_i$ at which $\mu$ is supported. Since $\mu$ is distributed over exponentially large scales, smoothing at scale 1 does not change any relevant characteristics. With this language in place, the second main result is as follows.

\begin{theorem}Let $c>0$. Suppose $\left\{a_1, \dots, a_n\right\} \subset \mathbb{R}_{>0}$ has 1-separated subset sums and
$a_n^2 \leq c \cdot n^{-1/2} \sum_{i=1}^{n} a_i^2.$
Then, as $n \rightarrow \infty$, we have
$$\int_{\mathbb{R}} \left( (h*\mu)(x) - \gamma(x) \right)^2 dx =  \int_{|x| \geq \frac{1}{4a_n}} \left( \frac{\sin{2\pi x}}{2\pi  x} \right)^2  \prod_{i=1}^{n} \cos{( 2\pi a_i x)^2} dx +o(2^{-n}).$$
\end{theorem}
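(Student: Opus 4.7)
The plan is to apply Plancherel's theorem and split by scale in frequency. With the convention $\widehat{f}(\xi) = \int f(x) e^{-2\pi i x\xi}\,dx$, the three objects have Fourier transforms
$$\widehat{h}(\xi) = \frac{\sin(2\pi\xi)}{2\pi\xi}, \qquad \widehat{\mu}(\xi) = \prod_{i=1}^n \cos(2\pi a_i \xi), \qquad \widehat{\gamma}(\xi) = e^{-2\pi^2 \sigma^2 \xi^2},$$
writing $\sigma^2 = \sum_{i=1}^n a_i^2$. Setting $F(\xi) = \widehat{h*\mu}(\xi) - \widehat{\gamma}(\xi)$, Plancherel gives $\int ((h*\mu) - \gamma)^2\,dx = \int |F|^2\,d\xi$, so the theorem reduces to
$$\int_{|\xi| \leq 1/(4a_n)} |F(\xi)|^2\,d\xi + \int_{|\xi| \geq 1/(4a_n)} \bigl(\widehat{\gamma}(\xi)^2 - 2\widehat{h*\mu}(\xi)\widehat{\gamma}(\xi)\bigr)\,d\xi = o(2^{-n}).$$

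The outer-region integral is straightforward. The hypothesis $a_n^2 \leq c \sigma^2 n^{-1/2}$ gives $\sigma^2/a_n^2 \geq \sqrt{n}/c$, so on $|\xi| \geq 1/(4a_n)$ one has $\widehat{\gamma}(\xi) \leq \exp(-\pi^2\sqrt{n}/(8c))$. Combined with $|\widehat{h*\mu}| \leq 1$ and standard Gaussian tail estimates, both $\int \widehat{\gamma}^2$ and $\int |\widehat{h*\mu}|\,\widehat{\gamma}$ on this region are $O((a_n/\sigma^2) e^{-c'\sqrt{n}})$ for some $c' > 0$, which by the trivial lower bound $a_n \geq (2^n-1)/(2n)$ is super-polynomially smaller than $2^{-n}$.

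For the inner integral, I would introduce a finer scale $T = \sigma^{-1}\sqrt{\log n}$, which by the hypothesis lies inside $[-1/(4a_n), 1/(4a_n)]$ for large $n$. On the annulus $T \leq |\xi| \leq 1/(4a_n)$, the elementary inequality $\cos^2(y) \leq e^{-y^2}$ for $|y| \leq \pi/2$ (which follows from $f(y) = 2\log\cos y + y^2$ satisfying $f(0) = 0$ and $f'(y) = 2(y - \tan y) \leq 0$) yields $|\widehat{h*\mu}(\xi)|^2 \leq \widehat{\gamma}(\xi)^2$, whence $|F|^2 \leq 4\widehat{\gamma}^2$; the Gaussian tail $\int_{|\xi| \geq T} \widehat{\gamma}^2\,d\xi \lesssim n^{-4\pi^2}/\sigma$ then makes this piece $o(2^{-n})$. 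On the central piece $|\xi| \leq T$, I would Taylor expand: writing
$$\prod_{i=1}^n \cos(2\pi a_i \xi) = \widehat{\gamma}(\xi)\, e^{E(\xi)}, \qquad E(\xi) = \sum_{i=1}^n \log\cos(2\pi a_i \xi) + 2\pi^2 \sigma^2 \xi^2,$$
the expansion $\log\cos(y) = -y^2/2 + O(y^4)$ gives $|E(\xi)| \leq C\xi^4 \sum a_i^4 \leq C\xi^4 a_n^2 \sigma^2$, which by the hypothesis tends to $0$ uniformly on $|\xi| \leq T$. Hence $|e^{E(\xi)} - 1| \leq 2|E(\xi)|$, and combining with $|\sin(2\pi\xi)/(2\pi\xi) - 1| \leq C\xi^2$ yields
$$|F(\xi)|^2 \leq C\widehat{\gamma}(\xi)^2 \bigl(\xi^4 + \xi^8 a_n^4 \sigma^4\bigr).$$
The Gaussian moment identities $\int \xi^{2k}\widehat{\gamma}^2\,d\xi = C_k \sigma^{-(2k+1)}$ integrate this over $\mathbb{R}$ to $O(\sigma^{-5}(1 + (a_n/\sigma)^4)) = O(\sigma^{-5})$; since $\sigma \geq a_n \geq (2^n-1)/(2n)$ forces $\sigma^{-5} = o(2^{-n})$, this piece too is $o(2^{-n})$.

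The main technical obstacle is the Taylor step, where one must line up two separate error sources (from $\sin(x)/x$ and $\log\cos$) at the right order and verify that the Gaussian-weighted $L^2$ integral sits below $2^{-n}$ by a factor going to zero. The hypothesis $a_n^2 \leq c\sigma^2 n^{-1/2}$ is used three times: to place $T$ inside $1/(4a_n)$, to force $E(\xi) \to 0$ uniformly on $|\xi| \leq T$, and via $(a_n/\sigma)^4 \leq c^2/n$ to tame the second term in the Taylor bound.
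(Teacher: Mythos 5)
Your approach is essentially the paper's: Plancherel, a split at $|\xi|=1/(4a_n)$, disposal of the Gaussian and cross terms on the outer region via the tail bound $\widehat{\gamma}(\xi)\le e^{-\pi^2\sqrt{n}/(8c)}$ there, and a two-scale Taylor-expansion argument for the inner region. The inner-region estimate is exactly the paper's Lemma 3, with your cutoff $T=\sqrt{\log n}/\sigma$ playing the role of the paper's $\alpha_n/\sigma$ for a slowly growing $\alpha_n$; you actually carry out the outer-region step more explicitly than the paper's own proof of Theorem 2 does.

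One computation needs repair. In the central piece you integrate $C\widehat{\gamma}(\xi)^2\left(\xi^4+\xi^8 a_n^4\sigma^4\right)$ and claim the result is $O\!\left(\sigma^{-5}(1+(a_n/\sigma)^4)\right)=O(\sigma^{-5})$. But $\int \xi^{8}\widehat{\gamma}^2\,d\xi=C_4\sigma^{-9}$, so the second term contributes $C_4\,a_n^4\sigma^4\cdot\sigma^{-9}=C_4\,(a_n/\sigma)^4\,\sigma^{-1}$, not $(a_n/\sigma)^4\sigma^{-5}$; the central piece is therefore $O\!\left(\sigma^{-5}+(a_n/\sigma)^4\sigma^{-1}\right)=O\!\left(\sigma^{-5}+n^{-1}\sigma^{-1}\right)$. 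The crude bound $\sigma\ge a_n\ge (2^n-1)/(2n)$ that you invoke controls $\sigma^{-5}$ just fine, but for the dominant term it only gives $n^{-1}\sigma^{-1}\le 2\cdot 2^{-n}$, i.e.\ $O(2^{-n})$ rather than $o(2^{-n})$, so as written the argument does not close. It is easily fixed: the hypothesis also yields $\sigma\ge c^{-1/2}n^{1/4}a_n$, whence $n^{-1}\sigma^{-1}\lesssim n^{-1/4}2^{-n}$; alternatively invoke the Erd\H{o}s--Moser second-moment bound $\sigma^2\ge (4^n-1)/3$ (which the paper uses in its Lemma 3 for exactly this purpose), giving $n^{-1}\sigma^{-1}\lesssim n^{-1}2^{-n}$. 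With either patch every piece is $o(2^{-n})$ and the proof is complete.
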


We emphasize that $h*\mu$ only assumes the values 0 and $2^{-n-1}$ (and the second value is assumed on $2^n$ intervals of length 2). This implies that
$$ \int_{\mathbb{R}} (h*\mu)(x)^2 dx = \frac{1}{2^{n+1}} \qquad \mbox{while} \qquad \int_{\mathbb{R}} \gamma(x)^2 dx = \frac{1}{2 \sqrt{\pi}} \left(\sum_{i=1}^{n}a_i^2\right)^{-1/2}.$$
We also remark that the probability density of the random walk behaving similarly to a Gaussian was already used by Dubroff, Fox and Xu who invoked the Berry-Esseen theorem. Under a slightly stronger assumption ($a_n^2 \leq c \cdot n^{-2/3-\varepsilon} \sum_{i=1}^{n} a_i^2$) the Berry-Esseen theorem  guarantees that 
$$ \sup_{x \in \mathbb{R}} \left| \mu\left([-\infty,x]\right) - \int_{-\infty}^{x} \gamma(y) dy \right| = o(1)$$
which shows convergence of the \textit{cumulative} distribution functions. Theorem 2 establishes that sets with distinct subset sums satisfy (using Theorem 1)
$$\int_{\mathbb{R}} \left( (h*\mu)(x) - \gamma(x) \right)^2 dx \leq \frac{1+o(1)}{2^n}$$
measuring proximity of the probability density functions in the $L^2-$sense.

\subsection{Concluding Remarks.}
Theorem 2 has a fascinating implication insofar as it allows us to reinterpret the Erd\H{o}s distinct subset sums problem (the general version with real numbers being 1-separated) as a genuine problem in probability theory asking whether particularly excellent random walks exist. More precisely, are there positive real numbers $a_1, \dots, a_n > 0$ such that the random unbiased random walk $X = \pm a_1 \pm a_2 \dots \pm a_n$ has, simultaneously, (1) a large standard deviation, (2) a small largest element $a_n$ and (3) the ability to approximate the normal distribution very well in a concrete sense?\\

\begin{quote}
\textbf{Problem.} Fix $c>0$. As $n \rightarrow \infty$, are there random walks  $X = \pm a_1 \pm a_2 \dots \pm a_n$ such that the largest step size is small compared to the variance
$$ \mbox{largest stepsize} = a_n \leq c \cdot n^{-1/3} \sqrt{ \mathbb{V} X}.$$
and, simultaneously, $X$ has a large variance and approximates a Gaussian well in the sense of
$$ \frac{1}{2 \sqrt{\pi}} \frac{1}{\sqrt{\mathbb{V} X}} + \int_{\mathbb{R}} \left( (h*\mu)(x) - \gamma(x) \right)^2 dx =  \frac{1+o(1) }{2^{n+1}}~?$$
\end{quote}

If there exist $\left\{a_1, \dots, a_n\right\} \subset \mathbb{N}$ with distinct subset sums and $a_n \lesssim n^{-1/3-\varepsilon} \cdot 2^n$, then such random walks do indeed exists: this follows from combining Theorem 1, Theorem 2 and the computation carried out after the proof of Lemma 1.\\

Note that, considering the constraint on $a_n$ being as small as possible and considering the structure of the first term, it does seem like one would like to have many of the $a_i$ to be roughly comparable to $a_n$. The Conway-Guy \cite{conway} sequence has this property: for each $\varepsilon > 0$ at least $n - c_{\varepsilon} \log{n}$ terms satisfy $a_i \geq (1-\varepsilon) a_n$. We also observe that for sets of that type, where many of the $a_i$ are comparable in size to $a_n$, one can draw additional information from Theorem 1 
$$ \int_{\mathbb{R}} \left( \frac{\sin{ x}}{  x} \right)^2  \prod_{i=1}^{n} \cos{( a_i x)^2} dx = \frac{\pi}{2^{n}}.$$
The cosines are all aligned at $x=0$, the contribution to the integral coming from close to the origin is really just a function of $\sum_{i=1}^{n} a_i^2$ (see the comment after the proof of Lemma 1) and fairly independent of the arithmetic structure. The next interesting point is $x = \pi/a_n$: if we have $a_i = (1+o(1)) a_n$ for many $1 \leq i \leq n-1$, then many of the cosines will still be aligned at $\pi/a_n$. The only way to avoid a large contribution is to have an $a_i \sim (1+o(1)) a_n/2$. So it is not inconceivable that Theorem 1 suggests a sort of multi-scale structure as being possibly favorable. The argument can then be continued for $x = k \pi/a_n$ for small $k \in \mathbb{Z}$. As $k$ gets larger, one would expect the cosines to decorrelate.

\section{Proofs}
\subsection{Proof of Theorem 1}
\begin{proof}
As already mentioned, we will smooth $\mu$ by convolving with the normalized characteristic function
$ h(x) = (1/2) \cdot \chi_{[-1,1]}.$
Since both $\mu$ and $h$ are probability measures, their convolution 
$$ (h * \mu)(x) = \frac{1}{2^n} \sum_{i=1}^{2^n} \frac{1}{2} \chi_{[x_i - 1, x_i + 1]}$$
is also a probability measure. We observe that $h*\mu$ is a sum of characteristic functions and its $L^1-$norm is $1$. Its $L^2-$norm is minimized if
and only if these characteristic functions do not overlap which is equivalent to $\min_{i \neq j} |x_i - x_j| \geq 2$ and therefore, in turn, equivalent to all subset sums being at least distance 1 from each other. Formally,
\begin{align*}
 \| h * \mu \|_{L^2}^2 &= \frac{1}{4^n}  \int_{\mathbb{R}} \left(\sum_{i=1}^{2^n} \frac{1}{2} \chi_{[x_i - 1, x_i + 1]}\right)^2 dx \\
 &=\frac{1}{4^n}  \int_{\mathbb{R}} \sum_{i,j=1}^{2^n} \frac{1}{2} \chi_{[x_i - 1, x_i + 1]} \frac{1}{2} \chi_{[x_j - 1, x_j + 1]}  dx \\
 &\geq \frac{1}{4^n}  \int_{\mathbb{R}} \sum_{i=1}^{2^n} \frac{1}{2} \chi_{[x_i - 1, x_i + 1]} \frac{1}{2} \chi_{[x_i - 1, x_i + 1]}  dx \\
 &= \frac{1}{4^n} 2^{n-1} = \frac{1}{2^{n+1}}.
 \end{align*}
 This is the only inequality in the entire argument and is attained if and only if all $x_i$ are $2-$separated.
Using that the Fourier transform is unitary on $L^2$ and sends convolution to products, 
$$ \| \mu * h\|_{L^2}^2 = \| \widehat{\mu * h} \|_{L^2}^2 = \| \widehat{\mu} \widehat{h} \|_{L^2}^2 = \int_{\mathbb{R}} \widehat{h}(\xi)^2 \widehat{\mu}(\xi)^2 d\xi.$$
It remains to compute the Fourier transforms: the Fourier transform of the characteristic function $h$ is completely explicit
$$ \widehat{h}(\xi) = \frac{\sin{(2\pi \xi)}}{2\pi \xi}.$$
The measure $\mu$ can itself be defined as a convolution
$$ \mu = \left( \frac{\delta_{-a_1}}{2} + \frac{\delta_{a_1}}{2}\right)* \left( \frac{\delta_{-a_2}}{2} + \frac{\delta_{a_2}}{2}\right) * \dots *  \left( \frac{\delta_{-a_n}}{2} + \frac{\delta_{a_n}}{2}\right).$$
Using again that the Fourier transform sends convolution to products and
$$ \widehat{ \left( \frac{\delta_{-a_i}}{2} + \frac{\delta_{a_i}}{2}\right)}(\xi) = \frac{e^{2\pi i (-a_i) \xi}}{2} +  \frac{e^{2\pi i a_i \xi}}{2} = \cos{(2\pi a_i \xi)}$$
leads to
$$ \widehat{\mu}(\xi) = \prod_{i=1}^{n} \cos{(2\pi a_i \xi)}.$$
Thus 
$$ \int_{\mathbb{R}} \left( \frac{\sin{2\pi x}}{2\pi  x} \right)^2  \prod_{i=1}^{n} \cos{( 2\pi a_i x)^2} dx \geq \frac{1}{2^{n+1}}$$
with equality if and only if all subset sums of $\left\{a_1, \dots, a_n\right\}$ are 1-separated.\end{proof}

\subsection{Proof of Corollary 1}
\begin{proof}
If all $a_i$ are integers, then the product is $1-$periodic and, together with
$$ \sum_{k \in \mathbb{Z}}  \left( \frac{\sin{2\pi (x-k)}}{2\pi  (x-k)} \right)^2 = \frac{1+\cos{(2\pi x)}}{2},$$
this implies
$$2 \int_{\mathbb{R}} \left( \frac{\sin{2\pi x}}{2\pi  x} \right)^2  \prod_{i=1}^{n} \cos{( 2\pi a_i x)^2} dx = \int_0^1  (1+\cos{(2\pi x)})\prod_{i=1}^{n} \cos{( 2\pi a_i x)^2} dx.$$
To further evaluate the integral, we switch back to exponentials and note that
$$  \cos{( 2\pi a_i x)^2}  = \frac{1}{2} + \frac{e^{4 \pi i a_i x} + e^{-4 \pi i a_i x}}{4}$$
leading to the integral
$$ \frac{1}{2^{n}}  \int_0^1  \left(1 + \frac{e^{2 \pi i x} + e^{-2\pi i x}}{2} \right) \prod_{i=1}^{n} \left(  1 + \frac{e^{4 \pi i a_i x} + e^{-4 \pi i a_i x}}{2} \right).$$
Selecting the constant 1 in all terms leads to a contribution of $2^{-n}$. Any other choice of combinations from the big product leads to exponentials of the form
$\exp( 4 \pi i k x)$ where $k \in \mathbb{Z} \setminus \left\{0 \right\}$ whenever all subset sums are distinct.  Thus every other contributions leads to 0 and
$$ \int_0^1  \prod_{i=1}^{n} \cos{( 2\pi a_i x)^2} dx \geq \frac{1}{2^n}$$
if all subset sums are distinct. Conversely, if not all subset sums are distinct, then there is a corresponding choice of combinations in the product leading to a zero frequency: since all coefficients are nonnegative, we see that the integral will then be larger than $2^{-n}$.
\end{proof}

\subsection{Proof of Lemma 1}
\begin{proof}[Proof, close to Elkies \cite{elk}] Note that, for example, $a_n \geq 2^n/n$, already implies that the interval is very close to the origin where $\sin{(2\pi x)}/(2\pi x) \sim 1$ and thus
$$  \int_{|x| \leq \frac{1}{4a_n}} \left( \frac{\sin{2\pi x}}{2\pi  x} \right)^2  \prod_{i=1}^{n} \cos{( 2\pi a_i x)^2} dx = \left(1 -o(1))\right)  \int_{|x| \leq \frac{1}{4a_n}}   \prod_{i=1}^{n} \cos{( 2\pi a_i x)^2} dx.$$
On this interval, we have, for all $1 \leq i \leq n-1$ that
 $ \cos{(2 \pi a_i x)} \geq  \cos{( 2 \pi a_n x)}$
and
 \begin{align*}
 \int_{|x| \leq \frac{1}{4a_n} }  \prod_{i=1}^{n} \cos{( 2 \pi a_i x)^2} dx \geq  \int_{|x| \leq \frac{1}{4 a_n}} \cos{( 2 \pi a_n x)^{2n}} dx
 \end{align*}
 A change of variables and evaluating the integral (see \cite{elk}) shows that
\begin{align*}
 \int_{|x| \leq \frac{1}{4 a_n} }  \cos{( 2 \pi a_n x)^{2n}} dx &= \frac{1}{2 \pi a_n} \int_{|x| \leq  \frac{\pi}{2}}  \cos{(  x)^{2n}} dx \\
 &= \frac{1}{ 2 \pi a_n}  \frac{\pi}{4^n} \binom{2n}{n} \\
 &= (1+o(1)) \frac{1}{2} \frac{1}{ a_n} \frac{1}{\sqrt{ \pi n}},
 \end{align*}
 where evaluating $\int \cos{(x)}^{2n} dx$ in terms of binomial coefficients is classical \cite{grad}.
\end{proof}
This implies a lower bound on $a_n$ since
\begin{align*}
 \frac{1}{2^{n+1}} &=\int_{\mathbb{R}} \left( \frac{\sin{2\pi x}}{2\pi  x} \right)^2  \prod_{i=1}^{n} \cos{( 2\pi a_i x)^2} dx\\
&\geq  \int_{|x| \leq \frac{1}{4a_n}} \left( \frac{\sin{2\pi x}}{2\pi  x} \right)^2  \prod_{i=1}^{n} \cos{( 2\pi a_i x)^2} dx \geq (1+o(1)) \frac{1}{2} \frac{1}{ a_n} \frac{1}{\sqrt{ \pi n}}
\end{align*}
showing that $a_n \geq 2^n/\sqrt{\pi n}$ which is, in spirit, the original argument of Elkies. We note that, provided $a_n$ is
small, i.e. $a_n = o(\sum_{i=1}^{n}{a_i^2})$, one can Taylor expand the cosines and, for $x$ small,
$$   \prod_{i=1}^{n} \cos{( 2\pi a_i x)^2} dx\sim \exp\left( - 4\pi^2 x^2 \sum_{i=1}^{n} a_i^2\right)$$
which then leads to the slightly refined estimate
$$  \int_{|x| \leq \frac{1}{4 a_n} }   \prod_{i=1}^{n} \cos{( 2\pi a_i x)^2} dx dx  \geq \frac{(1+o(1))}{2 \sqrt{\pi}} \left( \sum_{i=1}^{n} a_i^2 \right)^{-1/2}.$$
At this point, we do not know of any argument that excludes the possibility that $n-o(n)$ of the $a_i$ satisfy $a_i = (1+o(1)) a_n$ and
this refined estimate does not currently lead to any information different from that provided by the cruder estimate above.
Indeed, the Conway-Guy sequence is an example of a set with distinct subset sums and this type of behavior, perhaps extremal configurations do behave like that.

\subsection{Technical Lemma.}
The goal of this section is to establish an upper bound on the difference between $\widehat{\mu}$ and the approximating Gaussian measure close to the origin. Lemma 3 will then quickly imply Theorem 2.
\begin{lemma} 
Let $c>0$. Suppose $\left\{a_1, \dots, a_n\right\} \subset \mathbb{R}_{>0}$ has 1-separated subset sums and
$a_n^2 \leq c \cdot n^{-1/2} \sum_{i=1}^{n} a_i^2.$
Then, as $n \rightarrow \infty$, we have
$$ \int_{|x| \leq \frac{1}{4a_n}} \left| \frac{\sin{(2\pi x)}}{2 \pi x}  \prod_{i=1}^{n} \cos{( 2\pi a_i x)} -   \exp\left( -2 \pi^2 x^2  \sum_{i=1}^{n} a_i^2 \right) \right|^2 dx = o(2^{-n}).$$
\end{lemma}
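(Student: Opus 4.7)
The starting point is a pair of pointwise facts that hold uniformly on $|x|\leq 1/(4a_n)$. First, every argument satisfies $|2\pi a_i x|\leq \pi/2$, so $\cos(2\pi a_i x)\geq 0$. Second, the elementary inequality $\log\cos y\leq -y^2/2$ on $(-\pi/2,\pi/2)$ (check $\tfrac{d}{dy}(\log\cos y + y^2/2) = y-\tan y\leq 0$ on $[0,\pi/2)$) multiplies across $i$ to give the one-sided domination
$$
0\leq \prod_{i=1}^n \cos(2\pi a_i x)\leq \exp\!\left(-2\pi^2 x^2 \sum_{i=1}^n a_i^2\right) =: g(x),
$$
so the full integrand $f(x) := \frac{\sin(2\pi x)}{2\pi x}\prod\cos(2\pi a_i x)$ also satisfies $|f|\leq g$. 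With $S=\sum a_i^2$, the lemma is thereby reduced to a comparison of two nonnegative Gaussian-like profiles both concentrated at scale $1/\sqrt{S}$, a scale which by hypothesis is much smaller than the outer scale $1/a_n$.

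The plan is to split the integral at a threshold $T = M/\sqrt{S}$, where $M = M(n)\to\infty$ slowly; the choice $M = \sqrt{\log n}$ will suffice. On the outer piece $T\leq |x|\leq 1/(4a_n)$ the bound $|f-g|\leq 2g$ reduces everything to a standard Gaussian tail estimate
$$
\int_{|x|\geq T} g(x)^2\, dx \lesssim \frac{e^{-4\pi^2 M^2}}{M\sqrt{S}}.
$$
Since the $2^n$ subset sums are 1-separated one has $\sum a_i\geq 2^n-1$, and Cauchy--Schwarz gives $S\geq (2^n-1)^2/n$, so $1/\sqrt{S}\lesssim \sqrt{n}/2^n$. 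Hence the tail contributes $\lesssim \sqrt{n}\,e^{-4\pi^2 M^2}/2^n$, which is $o(2^{-n})$ once $M^2\gg \log n$.

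On the inner piece $|x|\leq T$ I would Taylor expand. Writing $\log\cos y = -y^2/2 + r(y)$ with $|r(y)|\leq C y^4$ for $|y|\leq 1$, summation yields $\prod\cos(2\pi a_i x) = g(x)\,e^{R(x)}$ with $|R(x)|\leq C x^4\sum a_i^4 \leq C x^4 a_n^2 S$. The hypothesis $a_n^2\leq c n^{-1/2} S$ together with $M=\sqrt{\log n}$ makes $T^4 a_n^2 S \leq c M^4/\sqrt{n} = o(1)$ uniformly on $|x|\leq T$, so $|R|\leq 1/2$ there and $|e^R-1|\leq 2|R|$; combining with $\widehat{h}(x) = 1 + O(x^2)$ produces the pointwise bound
$$
|f(x) - g(x)|^2 \;\leq\; C\,g(x)^2\bigl(x^4 + x^8 a_n^4 S^2\bigr).
$$
Integrating against the explicit Gaussian moments $\int x^{2k}e^{-4\pi^2 S x^2}\,dx \lesssim S^{-k-1/2}$ produces two terms: $S^{-5/2}$ and $a_n^4/S^{5/2}$. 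The first is $o(2^{-n})$ because $S\gtrsim 4^n/n$. The second, using $a_n^4\leq c^2 S^2/n$, is bounded by $c^2/(n\sqrt{S})\lesssim 1/(\sqrt{n}\,2^n) = o(2^{-n})$.

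The main obstacle is the balancing act required in the choice of $M$: the Taylor remainder is only controlled when $M^4/\sqrt{n}\to 0$, forcing $M = o(n^{1/8})$, while the Gaussian tail needs $M\gg \sqrt{\log n}$ in order to beat the target $2^{-n}$. The hypothesis $a_n^2\leq c n^{-1/2}S$ is precisely what opens the window $\sqrt{\log n}\ll M \ll n^{1/8}$ inside which both conditions can simultaneously hold, and this is the delicate use of the scaling hypothesis that makes the argument work; any substantial weakening of the hypothesis would collapse this window. A secondary technical point is the monotone comparison $\widehat{\mu}\leq g$, which turns the tail estimate into a one-sided Gaussian computation without having to track oscillations of $\prod\cos(2\pi a_i x)$.
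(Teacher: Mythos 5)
Your proof is correct, and its skeleton is the same as the paper's: split at a threshold slightly larger than the Gaussian width $(\sum_{i=1}^n a_i^2)^{-1/2}$, Taylor-expand $\log\cos$ on the inner piece, and on the outer piece use the pointwise domination $0\le\prod_i\cos(2\pi a_i x)\le\exp(-2\pi^2x^2\sum_i a_i^2)$ (valid since $|2\pi a_i x|\le\pi/2$ there) together with a Gaussian tail bound. The genuine difference is how the inner piece is closed: you retain the weights $x^4$ and $x^8a_n^4(\sum_i a_i^2)^2$ and integrate them against Gaussian moments, and you bound the quartic remainder via $\sum_i a_i^4\le a_n^2\sum_i a_i^2$, whereas the paper sup-bounds the prefactor by $\mathcal{O}(\delta^2+na_n^4\delta^4)$ over the whole inner interval $|x|\le\delta=\alpha_n(\sum_i a_i^2)^{-1/2}$. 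Your version is slightly sharper: it avoids the $\alpha_n^4$-type loss and yields $o(2^{-n})$ exactly under the stated hypothesis $a_n^2\le c\,n^{-1/2}\sum_i a_i^2$, while the paper's displayed estimate for the inner term really needs a little extra room in the exponent (consistent with the $n^{-2/3-\varepsilon}$ assumption under which the lemma is invoked in Lemma 2). A cosmetic difference: you use the Cauchy--Schwarz bound $\sum_i a_i^2\ge(2^n-1)^2/n$ where the paper uses Moser's $\sum_i a_i^2\gtrsim4^n$, which is why your cutoff must grow like $\sqrt{\log n}$ rather than arbitrarily slowly; both work, and your implicit need for $2\pi a_nT\le1$ on the inner region is indeed covered by your bound $T^4a_n^2\sum_i a_i^2=o(1)$. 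Only your closing commentary overstates the delicacy: the hypothesis is not really consumed by the window for $M$ (which exists under much weaker assumptions) but by the term $a_n^4(\sum_i a_i^2)^{-5/2}$; this does not affect the correctness of the argument.
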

\begin{proof} 
The first step is a Taylor expansion around $x=0$
\begin{align*}
 \frac{\sin{(2\pi x)}}{2 \pi x} \prod_{i=1}^{n} \cos{( 2\pi a_i x)}  &= \frac{\sin{(2\pi x)}}{2 \pi x} \exp\left(  \sum_{i=1}^{n} \log\left( \cos{( 2\pi a_i x)}  \right) \right) \\
 &\leq\frac{\sin{(2\pi x)}}{2 \pi x}  \exp\left(  \sum_{i=1}^{n} \log\left( 1 - 2 \pi^2 a_i^2 x^2 + \mathcal{O}(a_i^4 x^4)  \right) \right)\\
  &=\frac{\sin{(2\pi x)}}{2 \pi x}  \exp\left(  \sum_{i=1}^{n}  - 2 \pi^2 a_i^2 x^2 + \mathcal{O}(a_i^4 x^4)   \right) \\
  &=  e^{ \mathcal{O}(x^2 + n a_n^4 x^4)   }  \exp\left( -2 \pi^2 x^2  \sum_{i=1}^{n} a_i^2 \right).
\end{align*}
 The goal is to bound 
$$X = \int_{|x| \leq \frac{1}{4a_n}} \left|  \frac{\sin{(2\pi x)}}{2 \pi x}  \prod_{i=1}^{n} \cos{( 2\pi a_i x)} -   \exp\left( -2 \pi^2 x^2  \sum_{i=1}^{n} a_i^2 \right) \right|^2 dx$$
which, considering asymptotic expansion, can be bounded as
\begin{align*}
X \leq \int_{|x| \leq \frac{1}{4a_n}} \left|  e^{  \mathcal{O}(x^2 + n a_n^4 x^4)   }  - 1 \right|^2  \exp\left( -4 \pi^2 x^2  \sum_{i=1}^{n} a_i^2 \right) dx.
\end{align*}
This bound by itself is a little bit too crude but is reasonably close to the origin:
note that the integrand is the product of two functions the first of which is small for small values of $x$ and the second of which is small for large $x$.  This suggests splitting the integral into the regions: for some $0 < \delta < 1/(4a_n)$ to be optimized later, we write
$ I_1 = \left\{x : |x| \leq \delta \right\}$ and let  $I_2 = \left\{x : \delta  \leq  |x| \leq 1/(4 a_n) \right\}.$
Provided $\delta^2 + n a_n^2 \delta^4 = \mathcal{O}(1)$, we can estimate the integral over $I_1$ as
\begin{align*}
 Y &=   \int_{I_1} \left|  e^{  \mathcal{O}(x^2 + n a_n^4 x^4)   }  - 1 \right|^2  \exp\left( -4 \pi^2 x^2  \sum_{i=1}^{n} a_i^2 \right) dx \\
 &\leq \mathcal{O}(\delta^2 + n a_n^4 \delta^4) \cdot \int_{\mathbb{R}} \exp\left( -4 \pi^2 x^2  \sum_{i=1}^{n} a_i^2 \right) dx\\
  &=  \mathcal{O}(\delta^2 + n a_n^4 \delta^4)  \cdot \frac{1}{2 \sqrt{\pi}}\left(\sum_{i=1}^{n} a_i^2 \right)^{-1/2}.
  \end{align*}
We use a different type of expansion for the second region: note that, for $|x| < \pi/2$,
$$ \log\left(\cos{(x)}\right) \leq - \frac{x^2}{2}$$
and thus, for $|x| \leq1/(4a_n)$,
$ \cos{( 2\pi a_i x)}  \leq \exp\left( -2 \pi^2 x^2 a_i^2 \right)$
from which we deduce
$$ \forall |x| \leq \frac{1}{4a_n} \qquad  0 \leq \prod_{i=1}^{n} \cos{( 2\pi a_i x)} \leq \exp\left( -2 \pi^2 x^2  \sum_{i=1}^{n} a_i^2 \right).$$
Therefore the contribution of the integrand to $X$ over $I_2$, which is
$$  Z = \int_{I_2}  \left|  \exp\left( -2 \pi^2 x^2  \sum_{i=1}^{n} a_i^2 \right) -  \frac{\sin{(2\pi x)}}{2 \pi x}  \prod_{i=1}^{n} \cos{( 2\pi a_i x)}  \right|^2 dx,$$
 can be trivially bounded from above by
\begin{align*} 
 Z &\leq \int_{I_2}  \exp\left( -2 \pi^2 x^2  \sum_{i=1}^{n} a_i^2 \right)^2 dx \leq \frac{2}{\delta}  \int_{\delta}^{\infty} x \exp\left( -4 \pi^2 x^2  \sum_{i=1}^{n} a_i^2 \right) dx \\
 &= \frac{2}{\delta} \frac{1}{8 \pi^2 \sum_{i=1}^{n} a_i^2}  \exp\left( - 4 \delta^2 \pi^2 \sum_{i=1}^{n}{a_i^2}\right) \leq \frac{1}{\delta} \frac{1}{\sum_{i=1}^{n} a_i^2} \exp\left(- \delta^2 \sum_{i=1}^{n}a_i^2 \right).
\end{align*}
We want all error estimates to be $o(2^{-n})$ and achieve this by setting
$$ \delta = \alpha_n  \left(\sum_{i=1}^{n} a_i^2 \right)^{-1/2} $$
with $\alpha_n$ an arbitrarily slowly growing sequence (think of $\alpha_n = \log \log \log n$). We start by checking whether our first asymptotic expansion is valid in this regime, i.e. whether $\delta^2 + n a_n^2 \delta^4 = \mathcal{O}(1)$. Moser's estimate implies $\sum_{i=1}^{n} a_i^2 \gtrsim 4^n$ and thus
$$ \delta^2 + n a_n^2 \delta^4 = \mathcal{O}(\alpha_n^2 4^{-n}) + \mathcal{O}( n a_n^2 \alpha_n^4 4^{-2n})  = o(2^{-n}).$$
The next step is an estimate on $Y$. Importing our upper bound on $a_n$ shows
\begin{align*}
 Y &\leq  \mathcal{O}(\delta^2 + n a_n^4 \delta^4)  \left(\sum_{i=1}^{n} a_i^2 \right)^{-1/2} \lesssim n a_n^4 \alpha_n^4 \left(\sum_{i=1}^{n} a_i^2 \right)^{-5/2}\lesssim n^{-\varepsilon}  \left(\sum_{i=1}^{n} a_i^2 \right)^{-1/2} 
\end{align*}
which is $\mathcal{O}(n^{-\varepsilon} 2^{-n}) = o(2^{-n})$. Finally, for the last error term, 
$$ Z \leq \frac{1}{\delta} \frac{1}{\sum_{i=1}^{n} a_i^2} \exp\left(- \delta^2 \sum_{i=1}^{n}a_i^2 \right) \leq 10 \frac{2^{-n}}{\alpha_n} e^{-\alpha_n^2} = o(2^{-n}).$$
This proves Lemma 3.
\end{proof}

\subsection{Proof of Theorem 2}
\begin{proof} Theorem 2 is a relatively easy consequence of Lemma 3.
Recall that
$$ \gamma(x) = \frac{1}{\sqrt{2\pi}} \left( \sum_{i=1}^{n} a_i^2 \right)^{-1/2} \exp\left( -\frac{x^2}{2} \left( \sum_{i=1}^{n} a_i^2 \right)^{-1}\right)$$
and $\widehat{\gamma}(x) = \exp(-2\pi^2 x^2 \sum_{i=1}^{n} a_i^2 )$.
Using the Fourier transform we get that
$$ X = \int_{\mathbb{R}} | (h*\mu)(x) - \gamma(x)|^2 dx = \int_{\mathbb{R}} | (\widehat{h*\mu})(x) - \widehat{\gamma}(x)|^2 dx$$
can be written as
$$ X =  \int_{\mathbb{R}} \left| \frac{\sin{(2\pi x)}}{2 \pi x}  \prod_{i=1}^{n} \cos{( 2\pi a_i x)} -   \exp\left( -2 \pi^2 x^2  \sum_{i=1}^{n} a_i^2 \right) \right|^2 dx.$$
Lemma 3 implies that if $\left\{a_1, \dots, a_n\right\} \subset \mathbb{R}_{>0}$ has 1-separated subset sums and
$a_n^2 \leq c \cdot n^{-1/2} \sum_{i=1}^{n} a_i^2,$
then
$$ \int_{|x| \leq \frac{1}{4a_n}} \left| \frac{\sin{(2\pi x)}}{2 \pi x}  \prod_{i=1}^{n} \cos{( 2\pi a_i x)} -   \exp\left( -2 \pi^2 x^2  \sum_{i=1}^{n} a_i^2 \right) \right|^2 dx = o(2^{-n})$$
implying that, by splitting the integral into $\left\{|x| \leq 1/(4 a_n) \right\}$ and  $\left\{|x| \geq 1/(4 a_n) \right\}$,
$$ X =  \int_{|x| \geq \frac{1}{4a_n}} \left| \frac{\sin{(2\pi x)}}{2 \pi x}  \prod_{i=1}^{n} \cos{( 2\pi a_i x)} -   \exp\left( -2 \pi^2 x^2  \sum_{i=1}^{n} a_i^2 \right) \right|^2 dx +o (2^{-n}).$$
Using the upper bound on $a_n$
\begin{align*}
\int_{|x| \geq 1/(4 a_n)}  \exp\left( -4 \pi^2 x^2  \sum_{i=1}^{n} a_i^2 \right) dx &\leq 8 a_n \int_{1/(4 a_n)}^{\infty} x  \exp\left( -4 \pi^2 x^2  \sum_{i=1}^{n} a_i^2 \right) dx\\
&\leq \frac{a_n}{\sum_{i=1}^{n} a_i^2} \exp\left( - \frac{\pi^2}{4} \frac{\sum_{i=1}^{n}a_i^2}{a_n^2} \right) \\
&\lesssim \frac{1}{n^{1/4}} \frac{1}{ \left(\sum_{i=1}^{n} a_i^2\right)^{1/2}} e^{-c \sqrt{n}}.
\end{align*} 
 
With Moser's estimate 
$ \sum_{i=1}^{n} a_i^2 \gtrsim 4^n$
one deduces $$ \left\|   \exp\left( -2 \pi^2 x^2  \sum_{i=1}^{n} a_i^2 \right)  \chi_{|x| \geq \frac{1}{4a_n}} \right\|_{L^2(\mathbb{R})} \lesssim e^{-c \sqrt{n}} \frac{1}{2^n}.$$
Using the triangle inequality in $L^2$, we see that
 \begin{align*}
 Z &=  \left\|  \left[ \frac{\sin{(2 \pi x)}}{2\pi x} \prod_{i=1}^{n} \cos{( 2\pi a_i x)} -  \exp\left( -2 \pi^2 x^2  \sum_{i=1}^{n} a_i^2 \right) \right]  \chi_{|x| \geq \frac{1}{4a_n}} \right\|_{L^2(\mathbb{R})} \\
 &= \left\|   \frac{\sin{(2 \pi x)}}{2\pi x} \prod_{i=1}^{n} \cos{( 2\pi a_i x)}  \chi_{|x| \geq \frac{1}{4a_n}} \right\|_{L^2(\mathbb{R})} + \mathcal{O}\left(e^{-c \sqrt{n}} \frac{1}{2^n} \right).
 \end{align*}
Squaring both sides and using Theorem 1, we deduce
$$ X = Z^2 + o(2^{-n}) = \int_{|x| \geq \frac{1}{4a_n}} \left( \frac{\sin{2\pi x}}{2\pi  x} \right)^2  \prod_{i=1}^{n} \cos{( 2\pi a_i x)^2} dx + o(2^{-n}).$$
\end{proof}

\subsection{Proof of Lemma 2}  
Throughout this proof, we will abbreviate
$$ \gamma(x) = \frac{1}{\sqrt{2\pi}} \left( \sum_{i=1}^{n} a_i^2 \right)^{-1/2} \exp\left( -\frac{x^2}{2} \left( \sum_{i=1}^{n} a_i^2 \right)^{-1}\right)$$
for the Gaussian approximating $\mu$.
Before proving Lemma 2, we quickly recall the Berry-Esseen theorem which, in our setting, says that
$$ \sup_{x \in \mathbb{R}} \left| \mu([-\infty,x]) - \int_{-\infty}^{x} \gamma(x) dx \right| \leq  \frac{\sum_{i=1}^{n} a_i^3}{\left(\sum_{i=1}^{n} a_i^2 \right)^{3/2} } .$$
Assuming that $a_n^2 = \mathcal{O}\left( n^{-2/3 -\varepsilon} \sum_{i=1}^{n} a_i^2\right)$, one can bound this by
$$  \frac{\sum_{i=1}^{n} a_i^3}{\left(\sum_{i=1}^{n} a_i^2 \right)^{3/2} } \lesssim \frac{n \cdot a_n^3}{ {\left(\sum_{i=1}^{n} a_i^2 \right)^{3/2} }} \lesssim n^{-\frac{3 \varepsilon}{2}} = o(1).$$
The way we will use this information is that, for any interval $J \subset \mathbb{R}$
$$ \left| \mu(J) - \int_{J} \gamma(x) dx \right| = o(1).$$
Note that this argument was also used by Dubroff, Fox and Xu \cite{dubroff} for $J$ an interval centered at the origin whose length is proportional to a small multiple of the standard deviation of the Gaussian. We will quickly summarize their short argument at an appropriate place in the proof of Lemma 2.
\begin{proof}[Proof of Lemma 2] 
We start the argument with a lower bound on
$$ X = \int_{\mathbb{R}} \left| (\mu * h)(x)  - \gamma(x) \right|^2 dx.$$
Taking a Fourier transform,
 $$ X = \int_{\mathbb{R}}  \left|  \frac{\sin{(2 \pi x)}}{2\pi x} \prod_{i=1}^{n} \cos{( 2\pi a_i x)} -   \exp\left( -2 \pi^2 x^2  \sum_{i=1}^{n} a_i^2 \right) \right|^2 dx.$$
We split the integral into two regions: $|x| \leq 1/(4a_n)$ and the remaining region. Lemma 3 implies that the integral over the first region is $o(2^{-n})$, it remains to analyze the integral over the second region. Arguing exactly as in the proof of Theorem 2, we deduce that
$$ X = \int_{|x| \geq \frac{1}{4a_n}} \left( \frac{\sin{2\pi x}}{2\pi  x} \right)^2  \prod_{i=1}^{n} \cos{( 2\pi a_i x)^2} dx + o(2^{-n}).$$
The next argument is completely independent of all the previous arguments: we will derive a lower bound on the same quantity
via a completely different argument which will then imply Lemma 2. Recall that
$$ X = \int_{\mathbb{R}} \left| (\mu * h)(x) - \gamma(x)  \right|^2 dx.$$
 $\mu *h$ only assumes the values $\left\{0, 2^{-n-1} \right\}$.
 Moreover, by the argument above, 
\begin{align*}
\sup_{J \subset \mathbb{R} \atop J~\mbox{\tiny interval}} \left| \int_{J} (\mu * h)(x) dx - \int_{J}\gamma(x) dx \right| = o(1).
 \end{align*}
 This leads to an amusing setting: we know that $\mu*h$ approximates the Gaussian in probability over intervals. Simultaneously, $\mu*h$ can only assume two values one of which is 0: thus, the local density of the Gaussian predicts the density of intervals in the region where $\mu*h$ assumes its nonzero value $2^{-n-1}$. An example of what this could look like is shown in Fig. 2. We conclude with a simple proposition.
 
 \begin{proposition} Let $\mu$ be the probability density function of a $\mathcal{N}(0,\sigma^2)$ Gaussian. Let $(\nu_n)_{n}$ be a sequence of probability density functions such that
 \begin{enumerate}
 \item $\nu_n \rightarrow \mu$ in probability: for every interval $J \subset \mathbb{R}$ we have
 $$ \lim_{n \rightarrow \infty} \int_J \nu_n(x) dx = \int_J \mu(x) dx$$
 \item and $\nu_n(x)$ only assumes two values $\left\{0, z_n\right\}$ for some $z_n > 0$. 
 \end{enumerate}
 Then
 $$ \liminf_{n \rightarrow \infty} \int_{\mathbb{R}} (\mu(x) - \nu_n(x))^2 dx \geq \frac{\sqrt{2}-1}{2\sqrt{\pi} \sigma}.$$
 \end{proposition}
 \begin{proof}[Proof of the Proposition] The density of $\mu$ is simply given by
 $$ \mu(x) = \frac{1}{\sqrt{2\pi} \sigma} \exp\left( - \frac{1}{2} \frac{x^2}{\sigma^2} \right).$$
 We note that both Properties combined require (by taking $J$ to be a small interval centered around the origin) that
 $$ \liminf_{n \rightarrow \infty} z_n \geq \max_{x \in \mathbb{R}} \mu(x) = \frac{1}{\sqrt{2\pi} \sigma}.$$
 Let now $J$ be a small interval centered around $x_0 \in \mathbb{R}$, say $J = (x_0 - \varepsilon, x_0 + \varepsilon)$. The two properties combined tell us
what can be expected of $\nu_n$: since
$$ \int_{J} \mu(x) dx = 2 \varepsilon \mu(x_0) + \mathcal{O}(\varepsilon^2)$$
we have
$$  \lim_{n \rightarrow \infty} \int_J \nu_n(x) dx  = \int_{J} \mu(x) dx = 2 \varepsilon \mu(x_0) + \mathcal{O}(\varepsilon^2).$$
This allows us to deduce that the fraction $\alpha$ of the interval $J$ where $\nu_n$ assumes the value $z_n$ and the
remaining fraction $(1-\alpha)$ where it assumes the value 0 is determined by
$$ \alpha z_n = \mu(x_0) + \mbox{lower order terms}.$$
This tells us that
 $$ \int_{\mathbb{R}} (\mu(x) - \nu_n(x))^2 dx = (1+o(1)) \int_{\mathbb{R}} \frac{\mu(x)}{z_n} (\mu(x) - z_n)^2 + \left(1 -  \frac{\mu(x)}{z_n} \right) \mu(x)^2 dx.$$ 
 The integral algebraically simplifies to
  $$\int_{\mathbb{R}} \frac{\mu(x)}{z_n} (\mu(x) - z_n)^2 + \left(1 -  \frac{\mu(x)}{z_n} \right) \mu(x)^2 dx =   \int_{\mathbb{R}} \mu(x) (z_n - \mu(x)) dx.$$
  At this point, we recall that, up to lower order terms, $z_n \geq \mu(0)$. Thus
  $$  \int_{\mathbb{R}} \mu(x) (z_n - \mu(x)) dx \geq  \int_{\mathbb{R}} \mu(x) \left( \frac{1}{\sqrt{2\pi} \sigma} - \mu(x)\right) dx = \frac{1}{\sqrt{2\pi} \sigma} -  \frac{1}{2\sqrt{\pi} \sigma}$$
 which is the desired result.
 \end{proof}

    \begin{center}
\begin{figure}[h!]
\includegraphics[width=0.5\textwidth]{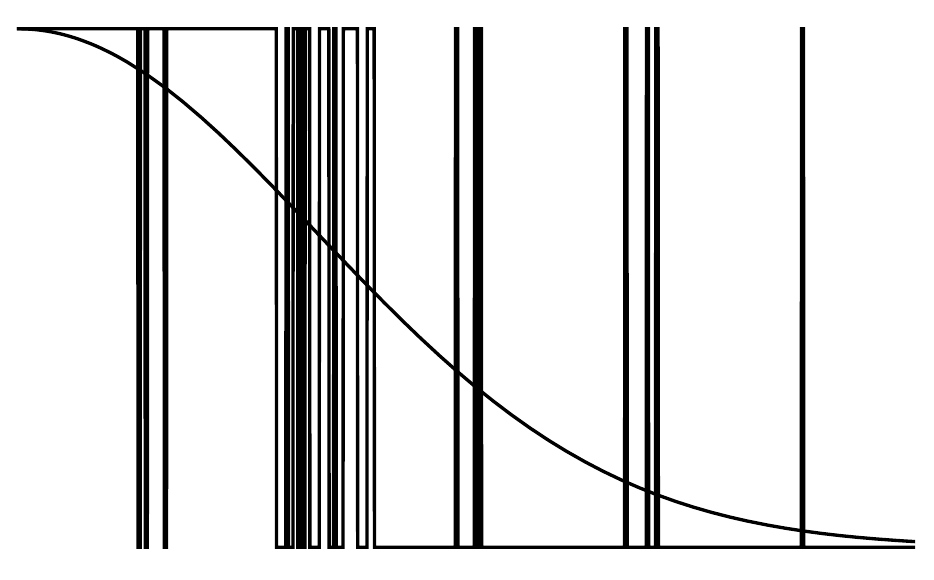}
\caption{A step function assuming only two values approximating a Gaussian density.}
\end{figure}
\end{center}
At this point can we quickly note, in passing, the original argument of Dubroff, Fox and Xu \cite{dubroff}:
the Gaussian attains its maximum density at the origin and therefore
$$\gamma(0) = \frac{1}{\sqrt{2 \pi} }\left( \sum_{i=1}^{n} a_i^2 \right)^{-1/2} \leq (1+o(1)) \cdot \| \mu*h\|_{L^{\infty}} = \frac{1+o(1)}{2^{n+1}}$$
from which one deduces
$$ \sqrt{n} \cdot a_n \geq \left( \sum_{i=1}^{n} a_i^2 \right)^{1/2}  \geq (1+o(1)) \sqrt{\frac{2}{\pi}} \cdot 2^n.$$
We can now conclude by applying the Proposition. The variance $\sigma$ of the mollified random walk is, up to lower order terms, given
by the variance of the random walk which is $\sum_{i=1}^{n} a_i^2$. Thus, applying the Proposition, as $n$ becomes large,
\begin{align*}
 X &\geq  (1+o(1))\frac{\sqrt{2}-1}{2  \sqrt{\pi} \left( \sum_{i=1}^{n} a_i^2 \right)^{1/2}}.
 \end{align*}
\end{proof}

\end{document}